\documentclass[12pt,twoside,final]{article}
\usepackage{chngpage}
\usepackage[a4paper,left=22mm,right=22mm,top=30mm,bottom=30mm]{geometry}
\usepackage{imakeidx}
\usepackage{xr-hyper}
\usepackage{xr}
\usepackage[all]{xy}
\usepackage[center]{titlesec}
\titleformat*{\section}{\large\bfseries}
\usepackage{enumerate,cite}
\usepackage[latin1]{inputenc}
\usepackage{amssymb}
\usepackage{amsthm}
\usepackage{amsmath}
\usepackage{amsfonts}
\usepackage{mathrsfs}
\usepackage{hyperref}
\usepackage{color}
\usepackage{graphicx}
\usepackage{setspace}
\usepackage{bm}
\usepackage{enumitem}
\usepackage{booktabs}
\usepackage[capitalize]{cleveref}
\usepackage{tikz}
\usetikzlibrary{matrix,chains}
\usepackage[center]{titlesec}
\titleformat*{\section}{\normalsize\bfseries}
\usepackage{indentfirst}

\usepackage{amscd}

\newcommand{\IBr}{\operatorname{IBr}\nolimits}

\theoremstyle{remark}

\theoremstyle{definition}

\theoremstyle{plain}
\newtheorem{thm}{Theorem}[section]
\newtheorem{conj}[thm]{Conjecture}
\newtheorem{lem}[thm]{Lemma}

\newtheorem{cor}[thm]{Corollary}

\newtheorem{prop}[thm]{Proposition}

\newtheorem{conj*}{Conjecture}
\numberwithin{equation}{thm}

\usepackage{lastpage}
\usepackage{fancyhdr}

\begin{document}

\begin{center}{\Large\bf  On the Sum of Squares of Irreducible Brauer Character Degrees in Blocks
}

\bigskip{\large}

\bigskip{Hanxiao Li and Kun Zhang}

\bigskip{\scriptsize 1.School of Mathematics and Statistics, Central China Normal University, Wuhan, 430079, China
\\2.Faculty of Mathematics and Statistics, Hubei University, Wuhan, 430062, China}
\end{center}

{\noindent\small{\bf Abstract}
We study a weakened version of the Holm--Willems Local Conjecture.
The problem is reduced to quasi-simple groups under the assumption that the defect group is abelian.
Complete proofs are provided in the case \(p = 2\).
}

\medskip\noindent{\small{{{\bf Keywords} Brauer character; Block algebra; Abelian defect group

}
 }}

\section  {Introduction}

Let \(p\) be a prime number and \(k\) an algebraically closed field of characteristic \(p\).
Let \(G\) be a finite group and \(b\) a block of \(G\), that is, a central primitive idempotent of the group algebra \(kG\).
For the block \(b\), let \(\IBr(b)\) denote the set of irreducible Brauer characters of \(b\),
and let \(l(b) = |\IBr(b)|\).  In this paper, we mainly study the invariant
\[\tau(b) := \dim_k(kGb) / \sum_{\chi \in \IBr(b) }\chi (1)^2.\]

In \cite{HW}, Holm and Willems proposed an upper bound for \(\tau(b)\):

\begin{conj}\label{local-conj}
Let \(G\) be a finite group and \(b\) a block of \(G\) with defect group \(P\). Then
\(\tau(b)\leq l(b)|P|,\)
with equality holding if and only if \(l(b) = 1\).
\end{conj}

Regarding the bound of \(l(b)\), Malle and Robinson proposed a fundamental problem in \cite{MR}. Let \(s(b)\) denote the sectional \(p\)-rank of the defect group \(P\), which is defined as the largest rank of an elementary abelian section of \(P\). They stated the following conjecture.
\begin{conj}\label{MRconj}
    Let \(G\) be a finite group and \(b\) a block of \(G\), then \(l(b) \leq p^{s(b)}\).
\end{conj}

The aforementioned conjectures have been verified in certain cases; see references \cite{HW,Z,MR} for details.
Motivated by the two conjectures above, we consider the following natural question:
for a block \(b\) of \(G\) with non-trivial defect group \(P\), does the strict inequality
\begin{equation}\label{equation}
\tau(b) < p^{s(b)}|P|
\end{equation}
always hold?
Our main theorem reduces this problem for blocks with abelian defect groups to the case of quasi-simple groups.

\begin{thm}\label{Main}
If inequality (\ref{equation}) holds for all blocks of quasi-simple groups with non-trivial abelian defect groups, then it holds for all blocks with non-trivial abelian defect groups.
\end{thm}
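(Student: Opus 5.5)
We argue by a minimal counterexample. Let \((G,b)\) be a counterexample with abelian defect group \(P\neq 1\), with \(|G:Z(G)|\) minimal and then \(|G|\) minimal. The two basic tools are the behaviour of \(\tau\) and \(s\) under normal subgroups, and the structure of \(\dim_k(kGb)\) under Clifford theory and Fong reductions.

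\emph{Step 1: Fong reductions.} Let \(N\trianglelefteq G\) and let \(c\) be a block of \(N\) covered by \(b\), with stabilizer \(T\) and Fong correspondent \(b_T\) of \(T\). Induction gives a Morita equivalence \(kGb\sim kTb_T\) with \(\dim kGb=|G:T|^2\dim kTb_T\). Brauer character degrees are multiplied by \(|G:T|\), so \(\tau(b)=\tau(b_T)\). Defect groups are preserved, so \(s\) is preserved too. Minimality therefore forces every block of every normal subgroup covered by \(b\) to be \(G\)-stable.

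Next, use the Külshammer--Puig/Fong second reduction for \(O_{p'}(G)\), via a central extension \(\tilde G\) in which the block becomes covering a linear character of a central \(p'\)-subgroup. We must check that \(\tau\) is invariant here. The key point is that in a twisted group algebra \(k_\alpha H\), the dimension of a block is again \(\sum \chi(1)\Phi_\chi(1)\), with projective covers of the same dimensions. This lets us assume \(O_{p'}(G)\le Z(G)\).

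\emph{Step 2: removing the \(p\)-part.} Since \(P\) is abelian, \(O_p(G)\le P\). Let \(Z=O_p(Z(G))\). If \(Z\ne1\), the block \(\bar b\) of \(G/Z\) dominated by \(b\) has defect group \(P/Z\) and the same Brauer characters. Moreover \(\dim kGb=|Z|\dim k\bar G\bar b\), by freeness of \(kG\) over \(kZ\) together with the fact that \(b\) is the unique block over \(\bar b\). Hence \(\tau(b)=|Z|\tau(\bar b)\). If \(P/Z\ne 1\), induction and \(s(\bar b)\le s(b)\) give
\[\tau(b)<|Z|p^{s(b)}|P/Z|.\]
If \(P=Z\), then \(\tau(b)=|Z|<p^{s(b)}|P|\). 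In the general case, a nontrivial \(O_p(G)\) that is not central is handled by the Külshammer theorem: the block is then a crossed product over \(G/C_G(O_p(G))\) with a smaller normal subgroup. I expect the inequality \(\tau(b)\le |G:N|_p\,\tau(c)\)-type estimates to be needed here, using the fact that \(kGb\) is free over \(kNc\) when \(c\) is \(G\)-stable.

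\emph{Step 3: the generalized Fitting subgroup.} After the reductions, \(F(G)\le Z(G)\), so \(F^*(G)=Z(G)E(G)\) with components \(L_1,\dots,L_t\) that are quasi-simple. The block \(b\) covers a \(G\)-stable block \(e\) of \(E(G)\), which is a tensor product of blocks \(e_i\) of the \(L_i\) with abelian defect groups \(P\cap L_i\). For such a tensor product, \(\tau\) is multiplicative and \(s\) is additive, and for trivial-defect factors \(\tau=1\). So the hypothesis gives \(\tau(e)<p^{s(e)}|P\cap E(G)|\), provided some factor has nontrivial defect.

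The remaining problem is to pass from \(E(G)\) to \(G\), using \(C_G(F^*(G))\le F^*(G)\), and this is the main obstacle. I would compare \(\dim kGb\) with \(|G:E(G)|\dim kE(G)e\) and bound \(\sum_{\IBr(b)}\chi(1)^2\) from below by Clifford theory, since each \(\varphi\in\IBr(b)\) restricts to a sum of \(G\)-conjugates of elements of \(\IBr(e)\). The \(p'\)-part of \(G/E(G)Z(G)\) contributes equally to numerator and denominator. The \(p\)-part is at most \(|P:P\cap E(G)Z(G)|\), and it also enlarges \(s\) by at most the rank of the corresponding abelian section. To make the accounting close, I would first try to show via Dade's and Harris--Knörr's correspondence that when \(G/E(G)Z(G)\) has nontrivial \(p\)-part, the abelian hypothesis forces a smaller counterexample, for instance by replacing \(G\) by \(E(G)PZ(G)\) through a further Fong-type reduction.
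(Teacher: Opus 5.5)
\textbf{The passage from $\mathbf{E}(G)$ to $G$ is missing.} Your preliminary reductions are fine: Fong's first and second reductions, the central $p$-part, and the tensor decomposition over $\mathbf{E}(G)$. But the step you call the main obstacle, getting from the block $e$ of $\mathbf{E}(G)$ back to $b$, is where the content of the theorem lies, and you do not carry it out.

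Your sketched accounting also cannot work for an arbitrary quotient, because an estimate $\tau(b)\le |G:N|_p\,\tau(c)$ is false in general. Take $N=1$, $G=A_5$, $p=2$. The principal block $B_0$ has $\dim_k(kGB_0)=1+9+9+25=44$ and $\sum_{\IBr(B_0)}\varphi(1)^2=1+4+4=9$, so $\tau=44/9>4=|G|_2$. Only two estimates are available:
\begin{enumerate}[label=(\alph*)]
\item if $G=PN$, then $b=c$, $\dim_k(kGb)=|P:P\cap N|\dim_k(kNc)$ and $\sum_{\IBr(b)}\chi(1)^2\ge\sum_{\IBr(c)}\varphi(1)^2$ (Lemma~\ref{p-Ex});
\item if $G/N$ is a $p'$-group, then $\tau(b)=\tau(c)$ (Lemma~\ref{p-Reg}).
\end{enumerate}
Estimate (b) needs a real argument: induction through a normal subgroup $M$ of prime index, using K\"ulshammer's decomposition $kGb\cong S\otimes_k kMb_M$ when $G[b_M]=G$, and Dade's uniqueness of the covering block when $G[b_M]=M$. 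The remark that the $p'$-part ``contributes equally'' does not prove it.

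So you must show that $G/\mathbf{E}(G)$ is $p$-solvable; this is the missing idea. The paper does it as follows.
\begin{enumerate}[label=(\roman*)]
\item Using Lemma~\ref{Nil-Ex} (the Coconet--Marcus--Todea form of K\"ulshammer--Puig), it reduces to the case where every normal subgroup covering a nilpotent block lies in $\mathbf{Z}(G)\mathbf{O}_p(G)$.
\item Then every component block $d_i$ is non-nilpotent, so $D\cap X_i\not\le\mathbf{Z}(X_i)$ for $D=P\cap\mathbf{F}^*(G)$. This forces $C_G(D)$ to normalize every component.
\item Put $K=\mathbf{F}^*(G)C_G(D)$. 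Since $P$ is abelian, $C_G(P)\le K$, and the Frattini argument on a maximal Brauer pair makes $G/K$ a $p'$-group.
\item $K/\mathbf{F}^*(G)$ embeds in $\prod_i\operatorname{Out}(X_i)$, which is solvable by Schreier.
\end{enumerate}
Only after this can one descend the upper $p$-series of $G/\mathbf{E}(G)$, alternating (a) and (b). None of this is in your proposal. ``Replacing $G$ by $\mathbf{E}(G)P\mathbf{Z}(G)$'' is not an option either, since that subgroup need not be normal and no Fong-type reduction produces it.

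\textbf{Two loose ends earlier in your argument.} First, you assert $\mathbf{F}(G)\le\mathbf{Z}(G)$ after the reductions, but you only defer a non-central $\mathbf{O}_p(G)$ (``I expect\ldots''). This is repairable. Since $\mathbf{O}_p(G)\le P$ and $P$ is abelian, $P\le H:=C_G(\mathbf{O}_p(G))$, so $G/H$ is a $p'$-group and you may pass to $H$, where $\mathbf{O}_p(G)$ is central. That works only once the $p'$-equality (b) is proved. Until then, $\mathbf{Z}(\mathbf{E}(G))$ may have a $p$-part and $e$ is not literally a tensor product; the paper first factors out $\mathbf{O}_p(\mathbf{Z}(\mathbf{E}(G)))$ using Lemma~\ref{Central}.

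Second, you never discharge your proviso ``provided some factor has nontrivial defect''. Nothing in your reductions rules out $P\cap\mathbf{E}(G)=1$, or components whose blocks have defect group inside $\mathbf{Z}(X_i)$. That is exactly the case that also breaks the normalization argument in (ii). You must apply the second Fong reduction to every normal subgroup covering a block of defect zero (or, as in the paper, a nilpotent block), not only to $\mathbf{O}_{p'}(G)$.

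Finally, your worry that $s$ grows along the way is moot: $s(b)\ge s(c)$ works in your favour.
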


As an application, we obtain the following corollary:

\begin{cor}\label{Main1}
Inequality (\ref{equation}) holds for all \(2\)-blocks with non-trivial abelian defect groups.
\end{cor}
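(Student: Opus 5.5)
By Theorem \ref{Main}, it suffices to prove inequality (\ref{equation}) for $p=2$ and for every block $b$ of a quasi-simple group $G$ with non-trivial abelian defect group $P$. Since $P$ is abelian, $s(b)$ is simply the rank $r\ge 1$ of $P$, so the target is $\tau(b)<2^{r}|P|$.

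\textbf{Step 1: a Morita-invariant upper bound for $\tau(b)$.} Let $C=(c_{\varphi\psi})$ be the Cartan matrix of $b$, and let $d=(\varphi(1))_{\varphi\in\IBr(b)}$. Then
\[
\dim_k(kGb)=\sum_{\varphi,\psi}\varphi(1)\,c_{\varphi\psi}\,\psi(1)=d^{T}Cd .
\]
Since $C$ is symmetric and positive definite, the Rayleigh quotient gives $\tau(b)=d^TCd/d^Td\le\lambda_{\max}(C)$. The point is that $C$, and hence $\lambda_{\max}(C)$, is invariant under Morita equivalence, although $\tau(b)$ itself is not. It therefore suffices to prove $\lambda_{\max}(C)<2^{r}|P|$ for a block $B$ of some finite group $H$ that is Morita equivalent to $b$. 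For such a block, the vector $e=(\dim S)_S$ of dimensions of the simple $B$-modules satisfies $(Ce)_S=\dim P(S)$. Since $C$ has positive entries (blocks are indecomposable), Perron--Frobenius gives
\[
\lambda_{\max}(C)\le \max_S \frac{\dim P(S)}{\dim S}.
\]

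\textbf{Step 2: invoke the classification.} I would apply the Eaton--Kessar--K\"ulshammer--Sambale classification of $2$-blocks with abelian defect groups, restricted to quasi-simple groups. Up to Morita equivalence, $b$ is one of the following:
\begin{itemize}
\item a block of $P\rtimes E$ with $E$ of odd order (the nilpotent and normal-defect-group cases);
\item the principal block of $SL_2(2^n)$, possibly times an abelian $2$-group;
\item the principal block of $J_1$;
\item the principal block of ${}^2G_2(3^{2m+1})$, possibly times an abelian $2$-group;
\item a block of $(P'\rtimes E)$ of one of these types.
\end{itemize}
In the first case every simple $kH$-module $S$ has projective cover $kP\otimes S$. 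Hence $\dim P(S)/\dim S=|P|$, so $\lambda_{\max}(C)=|P|<2|P|\le 2^{r}|P|$. Direct factors or extensions by an abelian $2$-group $Q$ on which $E$ acts compatibly multiply both sides of the estimate by $|Q|$, so they reduce to the remaining cases.

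\textbf{Step 3: the exceptional principal blocks.} This is the main computational obstacle. For $SL_2(2^n)$ we have $P\cong C_2^{\,n}$ and $r=n$. The simple modules are tensor products of Frobenius twists of the natural module, with dimensions $2^{|I|}$, and their projective indecomposable modules are known explicitly. I expect $\dim P(S)/\dim S\le 2^{n}\cdot 2^{n}/\dim S$ at worst for the trivial module, which is still below $2^{n}\cdot 2^{n}=2^r|P|$ once one uses $\dim P(k)<4^{n}$. Alternatively, I would compute $\lambda_{\max}$ from the known Cartan matrix, or compute $\tau(b)$ directly, since here $\dim kGb=|G|-(2^n)^2$ if the Steinberg block is removed. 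For $J_1$ we have $P=C_2^3$, and for ${}^2G_2(q)$ we have $P=C_2^3$ with $l(b)=5$. In both cases the decomposition and Cartan matrices of the principal $2$-blocks are known, so I would compute $\tau(b)$ outright and check that it is less than $2^{3}\cdot 8=64$. If the crude Perron bound fails in any of these cases, I would fall back on the exact value $\tau(b)=d^TCd/d^Td$ from the known Brauer character degrees of the actual quasi-simple group.
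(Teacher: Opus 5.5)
Your route differs from the paper's and buys something real. The paper bounds \(\tau(b)\le\mathrm{Tr}(C)\) (Holm--Willems), proves \(c_{\varphi\varphi}\le|P|\) case by case (Proposition~\ref{Car}), and then imports the Robinson/Malle--Robinson bound \(l(b)<2^{s(b)}\). You use the Rayleigh bound \(\tau(b)\le\lambda_{\max}(C)\), which is sharper because \(\lambda_{\max}(C)\le\mathrm{Tr}(C)\) for positive definite \(C\). You combine it with Morita invariance of \(C\) and the Collatz--Wielandt estimate on a convenient model. For inertial blocks, including the \(kV_4\) and \(kA_4\) Klein four types, this gives \(\lambda_{\max}(C)=|P|\) exactly. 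For \(kQ\otimes B_0(kA_5)\) it gives \(\lambda_{\max}(C)\le 12|Q|<16|Q|\). So no bound on \(l(b)\) is ever needed. The cost is that for the exceptional blocks you need the whole Cartan matrix or the PIM dimensions, not just its diagonal.

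As written, Step 2 has a hole. The classification you apply (Theorem~\ref{Classification}) has a case (ii): the unique non-principal \(2\)-block of \(Co_3\) with defect group \(C_2^3\). It is not on your list. It can be shown to be Morita equivalent to the principal block of \(\mathrm{Aut}(SL_2(8))\cong{}^2G_2(3)\), but that is a separate result you would have to cite. Your own method closes the gap directly: the Cartan matrix recorded in the proof of Proposition~\ref{Car} has largest row sum \(23\), so \(\lambda_{\max}(C)\le 23<64=2^3\cdot 8\).

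Step 3 is likewise only a plan and has to be carried out. It does work:
\begin{itemize}
\item for \(J_1\) the largest row sum of \(C\) is \(24\);
\item for the Ree groups, \(\lambda_{\max}(C)\le\mathrm{Tr}(C)\le 5\cdot 8\) from the known Cartan matrices;
\item for \(SL_2(2^n)\), \(n\ge2\), the only \(2\)-blocks are the principal and Steinberg blocks, so \(\tau(b)=(8^n-4^n-2^n)/(5^n-4^n)<4^n\).
\end{itemize}

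Two minor corrections. First, a Cartan matrix need not have positive entries; indecomposability only makes it irreducible. Collatz--Wielandt needs no positivity, since \(\rho(A)\le\max_i (Ax)_i/x_i\) for every nonnegative \(A\) and positive \(x\). Second, the inertial model is a twisted group algebra \(k_\alpha(P\rtimes E)\) rather than \(k(P\rtimes E)\). This leaves your computation \(\dim P(S)=|P|\dim S\) unchanged.
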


\section{Proof of Theorem \ref{Main}}

In this section,
we will give a proof of Theorem \ref{Main}.
In general, we follow the notation and terminology of \cite{N1}.

Let \(G\) be a finite group and \(b\) a block of \(G\).
Let \(N\) be a normal subgroup of \(G\) and \(c\) a block of \(N\) covered by \(b\).
Denote by \(\widehat{G}\) be the stabilizer of \(c\) in \(G\).
Let \(\hat{b}\) be the unique block of \(\widehat{G}\) covering \(c\) such that \(\hat{b}^G = b\) (see \cite[Theorem 9.14]{N1}).

\begin{lem}\label{Fong}
Keep the notation above. Then $\tau(b) = \tau(\hat{b})$.
\end{lem}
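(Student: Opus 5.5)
The plan is to use the Fong--Reynolds correspondence (\cite[Theorem 9.14]{N1} and its Brauer-character version) and compare numerator and denominator of \(\tau\) separately. Write \(t = |G:\widehat{G}|\).

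\textbf{Denominator.} Induction \(\varphi \mapsto \varphi^G\) gives a bijection \(\IBr(\hat b) \to \IBr(b)\), since every irreducible Brauer character of \(b\) lies over \(c\). Hence \(\varphi^G(1) = t\,\varphi(1)\), and so
\[\sum_{\chi\in\IBr(b)}\chi(1)^2 = t^2\sum_{\varphi\in\IBr(\hat b)}\varphi(1)^2 .\]

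\textbf{Numerator.} Use the ordinary characters: \(\dim_k kGb = \sum_{\chi\in\operatorname{Irr}(b)}\chi(1)^2\), which follows from the lifting of \(b\) to \(\mathcal{O}G\) and Wedderburn's theorem for \(KG\). Ordinary Fong--Reynolds gives that induction is a bijection \(\operatorname{Irr}(\hat b)\to\operatorname{Irr}(b)\), so \(\dim_k kGb = t^2\dim_k k\widehat{G}\hat b\). Dividing the two identities, the factor \(t^2\) cancels and \(\tau(b)=\tau(\hat b)\).

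\textbf{Alternative for the numerator.} One can argue purely module-theoretically. Let \(e=\sum_{g\in[G/\widehat G]}{}^g c\) be the \(G\)-orbit sum, so \(b\) is a summand of \(e\). Then \(kGe\cong M_t(k\widehat G \,c)\), and the block \(b\) corresponds to \(M_t(k\widehat G\hat b)\), which again gives a factor \(t^2\).

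\textbf{Main obstacle.} The only point needing care is that induction from \(\widehat G\) to \(G\) is a bijection on both ordinary and Brauer irreducible characters of the corresponding blocks, so that the degrees scale uniformly by \(t\). This is exactly the content of \cite[Theorem 9.14]{N1} together with the Clifford correspondence for Brauer characters (\cite[Theorem 8.9]{N1}). I will cite these directly.
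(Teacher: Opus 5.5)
Your proposal is correct and follows essentially the same route as the paper: the Fong--Reynolds bijections \(\operatorname{Irr}(\hat b)\to\operatorname{Irr}(b)\) and \(\IBr(\hat b)\to\IBr(b)\), both given by induction, multiply all degrees by \(t=|G:\widehat{G}|\), so the numerator and the denominator of \(\tau\) both scale by \(t^2\). The only difference is in the numerator. The paper uses the equality of Cartan matrices together with \(\dim_k kGb=\sum_{\varphi,\phi}c_{\varphi\phi}\varphi(1)\phi(1)\), leaving the factor \(t^2\) implicit. You instead use \(\dim_k kGb=\sum_{\chi\in\operatorname{Irr}(b)}\chi(1)^2\), or the isomorphism \(kGb\cong M_t(k\widehat{G}\hat b)\). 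Both are valid, and the matrix-algebra isomorphism gives the whole lemma at once, since \(\tau\) is unchanged under \(A\mapsto M_t(A)\).
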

\begin{proof}
By \cite[Theorem 9.14]{N1}, we obtain
\[
\operatorname{Irr}(b) = \left\{ \chi^G \mid \chi \in \operatorname{Irr}(\hat{b}) \right\} \quad \text{and} \quad \operatorname{IBr}(b) = \left\{ \varphi^G \mid \varphi \in \operatorname{IBr}(\hat{b}) \right\}.
\]
Let \((c_{\varphi \phi})_{\varphi,\phi \in \IBr(\hat{b})}\) and \( (c_{\varphi^G \phi^G})_{\varphi,\phi \in \IBr(\hat{b})} \) be the Cartan matrices of \(\hat{b}\) and \(b\), respectively. Then we have \(c_{\varphi \phi}=c_{\varphi^G \phi^G}\).
Recall that
\[
\dim_{k}(k\widehat{G}\hat{b})=\sum_{\varphi,\phi \in \IBr(\hat{b})} c_{\varphi \phi} \varphi(1) \phi(1),
\]
and a similar equality holds for $\dim_k(kGb)$. Consequently,
\[
\tau(b) = \frac{ \dim_{k}(kGb) }{ \sum_{\varphi \in \operatorname{IBr}(\hat{b})} \varphi^G(1)^2 } = \frac{ \dim_{k}(k\widehat{G}\hat{b})  }{ \sum_{\varphi \in \operatorname{IBr}(\hat{b})} \varphi(1)^2 } = \tau(\hat{b}).
\]
\end{proof}

\begin{lem}\label{Central}
Keep the notation above. Suppose that $N$ is a central $p$-subgroup of $G$. Let $P$ be a defect group of $b$, and let $\bar{b}$ be the block of $G/N$ dominated by $b$. Then
\[
\frac{\tau(b)}{p^{s(b)} |P|} \leq \frac{\tau(\bar{b})}{p^{s(\bar{b})} |P/N|}.
\]
\end{lem}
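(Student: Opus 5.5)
Since $N$ is a central $p$-subgroup, it lies in every defect group, so $N\le P$, and $P/N$ is a defect group of $\bar b$. The plan is to compare the three ingredients of the two quotients separately: the Brauer characters, the block dimensions, and the sectional ranks.

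\textbf{Brauer characters.} First recall that $N$ acts trivially on every simple $kG$-module, because $N$ is a normal $p$-subgroup. So inflation gives a degree-preserving bijection $\IBr(\bar b)\to\IBr(b)$. Consequently
\[
\sum_{\varphi\in\IBr(b)}\varphi(1)^2=\sum_{\bar\varphi\in\IBr(\bar b)}\bar\varphi(1)^2 .
\]

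\textbf{Dimensions.} Next compare $\dim_k(kGb)$ with $\dim_k(k\bar G\bar b)$. The natural surjection $kG\to k[G/N]$ maps $kGb$ onto $k\bar G\bar b$, and its kernel is $I(N)kGb$, where $I(N)$ is the augmentation ideal of $kN$. Since $kG$ is free as a $kN$-module, $kGb$ is a projective, hence free, $kN$-module. Indeed, $kN$ is local, because $N$ is a $p$-group. Therefore
\[
\dim_k(kGb)=|N|\cdot\dim_k\bigl(kGb\otimes_{kN}k\bigr)=|N|\dim_k(k\bar G\bar b).
\]
An equivalent route is via Cartan matrices: the Cartan matrix of $b$ equals $|N|$ times that of $\bar b$, since the projective indecomposable characters satisfy $\Phi_\varphi=\Phi_{\bar\varphi}\cdot(\text{regular character of }N)$ in the appropriate sense. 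Combining this with the first step gives $\tau(b)=|N|\,\tau(\bar b)$. Hence
\[
\frac{\tau(b)}{|P|}=\frac{\tau(\bar b)}{|P/N|}.
\]

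\textbf{Sectional ranks.} It remains to show $s(\bar b)\le s(b)$, i.e. that the sectional $p$-rank of $P/N$ is at most that of $P$. This holds because any section of $P/N$ is of the form $(A/N)/(B/N)\cong A/B$ with $N\le B\trianglelefteq A\le P$, which is a section of $P$. Thus $p^{s(b)}\ge p^{s(\bar b)}$, and the inequality follows.

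The only step requiring care is the dimension identity $\dim_k(kGb)=|N|\dim_k(k\bar G\bar b)$. It needs the facts that $b$ is the unique block dominating $\bar b$ and that the kernel of $kGb\to k\bar G\bar b$ is exactly $I(N)kGb$. I would justify both using the standard result that, for a central $p$-subgroup $N$, each block of $G/N$ is dominated by exactly one block of $G$, and that $\bar b$ is the image of $b$ (Navarro, Theorem 9.10).
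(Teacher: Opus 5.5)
Your proof is correct and follows the paper's argument. Both identify $\IBr(b)$ with $\IBr(\bar b)$ preserving degrees, show $\dim_k(kGb)=|N|\dim_k(k\bar G\bar b)$ so that $\tau(b)=|N|\,\tau(\bar b)$, and finish with $s(\bar b)\le s(b)$. The only variations are these: you get the dimension identity from freeness of $kGb$ over the local algebra $kN$ (using only that $N$ is a normal $p$-subgroup and that $b$ maps onto $\bar b$), whereas the paper reads it off from $C_b=|N|\,C_{\bar b}$ and $\dim_k(kGb)=\sum c_{\alpha\beta}\alpha(1)\beta(1)$; and you spell out the section argument for $s(\bar b)\le s(b)$, which the paper only asserts.
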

\begin{proof}
By \cite[Theorem 9.10]{N1}, we have $\IBr(\bar{b}) = \IBr(b)$, $P/N$ is a defect group of $\bar{b}$, and the Cartan matrices are related by $C_b = |N| \cdot C_{\bar{b}}$. Let $C_{\bar{b}} = (c_{\alpha\beta})_{\alpha, \beta \in \IBr(\bar{b})}$. Note that $s(b) \geq s(\bar{b})$, so $p^{s(b)} \geq p^{s(\bar{b})}$.
Therefore, we have
\[
\begin{aligned}
\frac{\tau(b)}{p^{s(b)} |P|}
&= \frac{|N|\sum_{\alpha,\beta\in \IBr(\bar{b})}c_{\alpha\beta}\alpha(1)\beta(1)}{p^{s(b)} |P|\sum_{\varphi \in \IBr(b) }\varphi (1)^2} \\
&= \frac{\tau(\bar{b})}{p^{s(b)} |P/N|} \\
&\leq \frac{\tau(\bar{b})}{p^{s(\bar{b})}|P/N|}.
\end{aligned}
\]
\end{proof}

\begin{lem}\label{IBr}
Keep the notation above. Suppose that \(G/N\) is a \(p'\)-group. Then, for any \(\varphi \in \IBr(N)\), we have
\[
\sum_{\chi \in \IBr(G|\varphi)} \chi(1)^2 = |G:G_{\varphi}| \cdot |G:N| \cdot \varphi(1)^2,
\]
where \(G_{\varphi}\) denotes the stabilizer of \(\varphi\) in \(G\).
\end{lem}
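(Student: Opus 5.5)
We reduce to the stabilizer and then use the fact that, over a \(p'\)-quotient, the Brauer characters above an invariant \(\varphi\) are governed by projective representations of \(G_\varphi/N\), for which the sum of squared degrees equals the group order.

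\textbf{Step 1 (Clifford reduction).} Put \(T=G_{\varphi}\). By the Clifford correspondence for Brauer characters, induction \(\theta\mapsto\theta^G\) is a bijection \(\IBr(T|\varphi)\to\IBr(G|\varphi)\), with \(\theta^G(1)=|G:T|\,\theta(1)\). Hence
\[
\sum_{\chi\in\IBr(G|\varphi)}\chi(1)^2=|G:T|^2\sum_{\theta\in\IBr(T|\varphi)}\theta(1)^2 .
\]
It therefore suffices to prove that \(\sum_{\theta\in\IBr(T|\varphi)}\theta(1)^2=|T:N|\,\varphi(1)^2\). Indeed, this gives a total of \(|G:T|^2|T:N|\varphi(1)^2=|G:T|\cdot|G:N|\cdot\varphi(1)^2\), which is the claim.

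\textbf{Step 2 (the invariant case).} Now \(\varphi\) is \(T\)-invariant and \(T/N\) is a \(p'\)-group. Since \(|T:N|\) is invertible in \(k\), the induced module \(\varphi^T\) is semisimple: a relative trace or averaging argument shows that every \(kT\)-submodule of \(V_\varphi\otimes_{kN}kT\) is a direct summand. Mackey's formula together with invariance gives
\[
(\varphi^T)_N=|T:N|\,\varphi .
\]
By Frobenius reciprocity, whose multiplicities are valid because \(\varphi^T\) is semisimple, each \(\theta\in\IBr(T|\varphi)\) occurs in \(\varphi^T\) with multiplicity \(m_\theta=\dim\operatorname{Hom}_{kN}(\varphi,\theta_N)\). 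Clifford's theorem gives \(\theta_N=e_\theta\varphi\), and since \(\operatorname{End}_{kN}(V_\varphi)=k\) we get \(m_\theta=e_\theta\). Therefore
\[
\varphi^T=\sum_\theta e_\theta\,\theta,\qquad \theta(1)=e_\theta\,\varphi(1).
\]
Evaluating degrees yields \(|T:N|\varphi(1)=\sum_\theta e_\theta^2\varphi(1)\), so \(\sum_\theta e_\theta^2=|T:N|\). Multiplying by \(\varphi(1)^2\) gives
\[
\sum_\theta\theta(1)^2=\sum_\theta e_\theta^2\varphi(1)^2=|T:N|\,\varphi(1)^2 .
\]

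\textbf{Main obstacle.} The delicate point is justifying semisimplicity of \(\varphi^T\) and the identification \(m_\theta=e_\theta\) in characteristic \(p\).

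For semisimplicity, apply Maschke's averaging over a transversal of \(N\) in \(T\). Let \(W\subseteq\varphi^T\) be a \(kT\)-submodule; it is a direct summand as a \(kN\)-module. Let \(\pi\) be a \(kN\)-projection onto \(W\), and set
\[
\pi'=|T:N|^{-1}\sum_{t\in T/N} t\,\pi\, t^{-1}.
\]
Then \(\pi'\) is a \(kT\)-projection onto \(W\).

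For \(\varphi^T\) to be semisimple as a \(kN\)-module, note that it is the direct sum of the conjugates of \(\varphi\), all isomorphic to \(\varphi\) by invariance.

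Alternatively, one may invoke the known result, e.g. in Navarro's book \cite{N1}, that \(\IBr(T|\varphi)\) is in bijection with the irreducible projective representations of \(T/N\) for a suitable cocycle, with \(\theta(1)=\varphi(1)e_\theta\). Here \(\sum e_\theta^2=|T:N|\) holds because the twisted group algebra of a \(p'\)-group over \(k\) is semisimple of dimension \(|T:N|\).
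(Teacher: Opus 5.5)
Your proof is correct and is essentially the paper's argument. Both rest on three ingredients: the Clifford correspondence, the identity \(\theta(1)=e_\theta\varphi(1)\), and the fact that for \(p'\)-index the induced Brauer character decomposes with multiplicities equal to the ramification indices, after which one compares degrees. The differences are cosmetic. The paper works directly with \(\varphi^G=\sum_\chi \ell_\chi\chi\), citing Navarro's Corollary 8.7, and absorbs the factor \(|G:G_\varphi|\) via \(\chi(1)=|G:G_\varphi|f(\chi)(1)\). You instead pass to the stabilizer \(T\) first and prove the decomposition of \(\varphi^T\) yourself by a Maschke averaging argument.
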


\begin{proof}
For any \(\chi \in \IBr(G|\varphi)\), let \(\ell_{\chi}\) be the multiplicity of \(\varphi\) in the restriction character \(\chi_N\) and \(f(\chi)\) the unique irreducible Brauer character of \(G_{\varphi}\) such that \(f(\chi)^G = \chi\).
By \cite[Theorem 8.9]{N1}, we have \(\ell_{\chi} = f(\chi)(1)/\varphi(1)\). Since \(|G:N|\) is coprime to \(p\), it follows from \cite[Corollary 8.7]{N1} that \(\varphi^G = \sum_{\chi \in \IBr(G|\varphi)} \ell_{\chi} \chi\).
Therefore, we have
\[
\begin{aligned}
\sum_{\chi \in \IBr(G|\varphi)} \chi(1)^2
&= |G:G_{\varphi}| \cdot \sum_{\chi \in \IBr(G|\varphi)} f(\chi)(1) \chi(1) \\
&= |G:G_{\varphi}| \cdot \varphi(1) \cdot \sum_{\chi \in \IBr(G|\varphi)} \ell_{\chi} \chi(1) \\
&= |G:G_{\varphi}| \cdot |G:N| \cdot \varphi(1)^2.
\end{aligned}
\]
\end{proof}

\begin{lem}\label{p-Reg}
Keep the notation above. If \(G/N\) is a \(p'\)-group, then \(\tau(b)=\tau(c)\).
\end{lem}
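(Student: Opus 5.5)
By Lemma~\ref{Fong} we have $\tau(b)=\tau(\hat b)$, and $\hat b$ is a block of $\widehat G$ covering $c$ with $\widehat G/N$ still a $p'$-group. So I will first reduce to the case where $c$ is $G$-stable, i.e.\ $\widehat G=G$. In that case $kGc$ is a sum of blocks of $G$ covering $c$. We also have $\dim_k kGc=|G:N|\dim_k kNc$, and, summing Lemma~\ref{IBr} over $G$-orbits in $\IBr(c)$, $\sum_{\chi\in\IBr(G|c)}\chi(1)^2=|G:N|\sum_{\varphi\in\IBr(c)}\varphi(1)^2$. Hence the \emph{aggregate} ratio over all blocks covering $c$ equals $\tau(c)$. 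What remains is to show that each individual $b$ has the same ratio.

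To compute $\dim_k kGb$, I will write it as $\sum_{\chi\in\IBr(b)}\chi(1)\Phi_\chi(1)$, where $\Phi_\chi$ is the degree of the projective indecomposable cover. The first key step is to compare $\Phi_\chi$ with $\Phi_\varphi$ for $\varphi\in\IBr(N)$ under $\chi$. Since $|G:N|$ is prime to $p$, we have $J(kG)=kG\,J(kN)$. So the restriction to $N$ of the projective cover $P_\chi$ is projective, with head $(\chi_N)=\ell_\chi\sum_{g\in G/G_\varphi}\varphi^g$. Therefore $(P_\chi)_N\cong\ell_\chi\bigoplus_g P_{\varphi^g}$, and $\Phi_\chi(1)=\ell_\chi|G:G_\varphi|\Phi_\varphi(1)$. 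Since $\chi(1)=\ell_\chi|G:G_\varphi|\varphi(1)$ by \cite[Theorem 8.9]{N1}, this gives $\chi(1)\Phi_\chi(1)=\chi(1)^2\,\Phi_\varphi(1)/\varphi(1)$. Writing $a_b(\varphi):=\sum_{\chi\in\IBr(b)\cap\IBr(G|\varphi)}\chi(1)^2$, we obtain
\[
\tau(b)=\frac{\sum_{\varphi}a_b(\varphi)\,\Phi_\varphi(1)/\varphi(1)}{\sum_{\varphi}a_b(\varphi)},
\]
where $\varphi$ runs over representatives of the $G$-orbits in $\IBr(c)$.

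The second key step, and the main obstacle, is to show that $a_b(\varphi)/\bigl(|G:G_\varphi|\varphi(1)^2\bigr)$ is independent of $\varphi\in\IBr(c)$ for a fixed block $b$ covering $c$. Once that holds, the displayed formula collapses to $\sum_{\varphi\in\IBr(c)}\varphi(1)\Phi_\varphi(1)/\sum_{\varphi\in\IBr(c)}\varphi(1)^2=\tau(c)$. Lemma~\ref{IBr} gives exactly this independence after summing over all blocks covering $c$, where the constant is $|G:N|$. For a single $b$, I would first try the crossed-product description: $kGc=\bigoplus_{x\in G/N}kNc\,x$ is a $G/N$-graded crossed product over $kNc$. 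Dade's/K\"ulshammer's theory then gives a normal subgroup $G[c]\supseteq N$ and a twisted group algebra $k_\alpha[G[c]/N]$ of a $p'$-group such that the blocks of $G$ covering $c$ correspond to the blocks (simple components) of $k_\alpha[G[c]/N]$, compatibly with all $\varphi\in\IBr(c)$ simultaneously. Under this correspondence, the part of $b$ lying over each $\varphi$ should be governed by the same simple $k_\alpha[G[c]/N]$-module of dimension $d_b$. That would give $a_b(\varphi)=\lambda_b|G:G_\varphi|\varphi(1)^2$ with $\lambda_b$ depending only on $b$ (essentially $d_b^2$ times a fixed index). If this uniform description turns out to be too hard to extract from the literature, a fallback is to work with central characters: compare $Z(kGb)$ with the $G$-fixed points $Z(kNc)^G\otimes$ the corresponding central idempotent of the twisted group algebra, and read off the degrees $\chi(1)$ from that decomposition.
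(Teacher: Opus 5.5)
Your first step is correct and useful. From $J(kG)=kG\,J(kN)$ you get $(P_\chi)_N\cong\ell_\chi\bigoplus_g P_{\varphi^g}$, hence $\chi(1)\Phi_\chi(1)=\chi(1)^2\Phi_\varphi(1)/\varphi(1)$. This makes the lemma equivalent to the claim that $a_b(\varphi)/\bigl(|G:G_\varphi|\varphi(1)^2\bigr)$ depends only on $b$.

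\textbf{The gap.} That claim is the whole content of the lemma, and you do not prove it. You only say that the part of $b$ over each $\varphi$ ``should be governed by the same simple module''. The central-character fallback does not evidently determine Brauer character degrees at all.

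The structure you invoke is also misdescribed. K\"ulshammer's theorem gives $kG[c]c\cong kNc\otimes_k k_\alpha[G[c]/N]$. So the simple components of $k_\alpha[G[c]/N]$ correspond to blocks of $G[c]$ covering $c$, not to blocks of $G$. Elements of $G\setminus G[c]$ permute these components. They also permute the $\varphi$'s, whose stabilizers $G_\varphi\supseteq G[c]$ vary with $\varphi$. A priori, several blocks of $G$ could cover the same block of $G[c]$ and split $\IBr(G|\psi)$ among them. Your sketch never handles this second layer, and that is exactly where uniformity in $\varphi$ has to be established.

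\textbf{How to close it along your lines.}
\begin{enumerate}
\item Put $H=G[c]$, write $k_\alpha[H/N]\cong\prod_j M_{n_j}(k)$, and let $f_j$ be the corresponding blocks of $H$. K\"ulshammer's isomorphism sends $kNc$ to $kNc\otimes 1$. Hence the only Brauer character of $f_j$ lying over $\varphi$ is that of $V_\varphi\otimes k^{n_j}$, of degree $n_j\varphi(1)$.
\item By Dade's theorem, the blocks of $G$ covering $c$ are exactly the $G$-orbit sums of the $f_j$. So $b$ is the unique block of $G$ covering $f_j$.
\item Consequently $\IBr(b)\cap\IBr(G|\varphi)$ is the disjoint union of the sets $\IBr(G|\psi)$. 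Here $\psi$ runs over $G$-orbit representatives of the $|G:G_\varphi|\cdot|G:G_{f_j}|$ characters of this form lying over conjugates of $\varphi$ in conjugates of $f_j$.
\item Lemma~\ref{IBr}, applied to $H\trianglelefteq G$, then gives $a_b(\varphi)=|G:H|\,|G:G_{f_j}|\,n_j^2\,|G:G_\varphi|\,\varphi(1)^2$, as you need.
\end{enumerate}

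\textbf{Comparison with the paper.} The paper avoids this bookkeeping by inducting along $N\le M<G$ with $G/M$ simple. Then $G[b_M]$ is either $G$ or $M$. In the first case $kGb\cong S\otimes_k kMb_M$ directly. In the second, $b=b_M$ by Dade, and Lemma~\ref{IBr} finishes. Your non-inductive route, once completed, gives the constant explicitly, but it needs both ingredients simultaneously.
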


\begin{proof}
We proceed by induction on \(|G : N|\). By Lemma \ref{Fong}, for any \(N \leq H \trianglelefteq G\), we assume that the block of \(H\) covered by \(b\) is \(G\)-invariant. Denote by \(b_H\) the block of \(H\) covering \(c\) and covered by \(b\).

The case \(G = N\) is trivial, so we assume \(N < G\) and take a normal subgroup \(M\) of \(G\) such that \(N \leq M < G\) and \(G/M\) is a simple group.
Let \(G[b_M]\) be the subgroup of \(G\) consisting of elements that act as inner automorphisms on the algebra \(kM b_M\). Then \(M \leq G[b_M] \trianglelefteq G\), and thus either \(G[b_M] = G\) or \(G[b_M] = M\).

First, suppose that \(G = G[b_M]\).
Since \(G/M\) is a \(p'\)-group, it follows from \cite[Theorem 7]{Ku} that there exists a simple \(k\)-algebra \(S\) such that \(kGb \cong S \otimes_k kM b_M\). This isomorphism implies that \(\tau(b) = \tau(b_M)\).

Now, suppose that \(M = G[b_M]\).
By \cite[Theorem 3.5]{Da}, \(b\) is the unique block of \(G\) covering \(b_M\), and thus \(\IBr(b) = \bigcup_{\varphi \in \IBr(b_M)} \IBr(G | \varphi)\).
Observe that for any \(\varphi \in \IBr(b_M)\), there exist exactly \(|G:G_{\varphi}|\) irreducible Brauer characters in \(\IBr(b_M)\) which are \(G\)-conjugate to \(\varphi\). Applying Lemma \ref{IBr}, we derive the following
\[
\sum_{\chi \in \IBr(b)} \chi(1)^2 = \sum_{\varphi \in \IBr(b_M)} \frac{1}{|G:G_{\varphi}|} \sum_{\chi \in \IBr(G|\varphi)} \chi(1)^2 = |G:M| \sum_{\varphi \in \IBr(b_M)} \varphi(1)^2.
\]
Combined with the equality \(\dim_k(kGb) = |G:M| \dim_k(kM b_M)\), this yields \(\tau(b) = \tau(b_M)\).

In both cases, we have shown that \(\tau(b) = \tau(b_M)\). By the induction hypothesis, \(\tau(b_M) = \tau(c)\), and therefore \(\tau(b) = \tau(c)\).
\end{proof}

\begin{lem}\label{p-Ex}
Keep the notation above. Let \(P\) be a defect group of \(b\).
If \(G = PN\), then
\[
\frac{\tau(b)}{p^{s(b)}|P|} \leq \frac{\tau(c)}{p^{s(c)}|P \cap N|}.
\]
\end{lem}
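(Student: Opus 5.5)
The plan is to reduce to the case where \(c\) is \(G\)-invariant, identify \(b\) with \(c\), and then compare \(\tau(b)\) with \(\tau(c)\) using Green's theorem.

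\textbf{Step 1: reduction to \(c\) being \(G\)-invariant.} By Lemma \ref{Fong}, \(\tau(b)=\tau(\hat b)\). By \cite[Theorem 9.14]{N1}, a defect group of \(\hat b\) is a defect group of \(b\). Both sides of the claimed inequality depend on \(P\) only up to \(G\)-conjugacy: for the right-hand side, \(P^g\cap N=(P\cap N)^g\), and \(s(c)\) is an invariant of \(c\). So we may replace \(P\) by a defect group of \(\hat b\) and assume \(P\le \widehat G\). Since \(\widehat G\) contains both \(N\) and \(P\), we get \(\widehat G\supseteq PN=G\). Hence \(c\) is \(G\)-invariant and no actual passage to a subgroup is needed.

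\textbf{Step 2: the structure of \(b\).} Now \(G/N\cong P/(P\cap N)\) is a \(p\)-group and \(c\) is \(G\)-invariant. The idempotent \(c\) is then central in \(kG\), and it is primitive there (a \(G\)-invariant block of a normal subgroup of \(p\)-power index is covered by a unique block). So \(b=c\) as idempotents, which gives
\[\dim_k(kGb)=|G:N|\,\dim_k(kNc).\]
Moreover \(P\cap N\) is a defect group of \(c\), so \(|G:N|=|P:P\cap N|\).

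\textbf{Step 3: comparing sums of squares.} We have \(\IBr(b)=\IBr(G\mid c)\). Since \(G/N\) is a \(p\)-group, Green's theorem applies to each \(\varphi\in\IBr(c)\): there is a unique \(\chi\in\IBr(G\mid\varphi)\), namely the induced character of the unique extension of \(\varphi\) to \(G_\varphi\). Its restriction \(\chi_N\) is the sum of the \(|G:G_\varphi|\) distinct conjugates of \(\varphi\), so \(\chi(1)=|G:G_\varphi|\varphi(1)\). Summing over \(G\)-orbits on \(\IBr(c)\) gives
\[\sum_{\chi\in\IBr(b)}\chi(1)^2=\sum_{\varphi\in\IBr(c)}|G:G_\varphi|\,\varphi(1)^2\ \ge\ \sum_{\varphi\in\IBr(c)}\varphi(1)^2.\]
Combining this with Step 2,
\[\tau(b)\le |G:N|\,\tau(c)=|P:P\cap N|\,\tau(c),\]
that is, \(\tau(b)/|P|\le\tau(c)/|P\cap N|\).

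\textbf{Step 4: the rank factor.} Every elementary abelian section of \(P\cap N\) is also a section of \(P\), so \(s(c)\le s(b)\). Dividing the inequality from Step 3 by \(p^{s(b)}\ge p^{s(c)}\) yields the claim.

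\textbf{Main obstacle.} The delicate point is Step 3. We need precisely that, for a normal subgroup of \(p\)-power index, each irreducible Brauer character of \(N\) lies under exactly one irreducible Brauer character of \(G\), with restriction multiplicity \(1\). This is Green's theorem, in the form of the statement that \(G_\varphi/N\) has only the trivial irreducible Brauer character, so an invariant \(\varphi\) extends uniquely. Step 1 also needs care: we must check that Fong reduction does not destroy the hypothesis \(G=PN\). This is handled by conjugating \(P\) into \(\widehat G\).
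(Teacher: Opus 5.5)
Your proposal is correct and follows essentially the same route as the paper: you show that \(c\) is \(G\)-invariant, so \(b=c\) and \(\dim_k(kGb)=|P:P\cap N|\dim_k(kNc)\). You then use Clifford theory together with Green's theorem to get \(\sum_{\chi\in\IBr(b)}\chi(1)^2=\sum_{\varphi\in\IBr(c)}|G:G_\varphi|\varphi(1)^2\ge\sum_{\varphi\in\IBr(c)}\varphi(1)^2\), and you finish with \(s(c)\le s(b)\), since \(P\cap N\) is a defect group of \(c\).

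The only differences are minor. You spell out the Fong--Reynolds and conjugation argument for the \(G\)-invariance of \(c\), which the paper simply cites. Also, \(|G:N|=|P:P\cap N|\) follows directly from \(G=PN\), not from \(P\cap N\) being a defect group.
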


\begin{proof}
Since \(P\) is a defect group of \(b\) and \(G= PN\), it follows from \cite[Corollary 9.6 and Theorem 9.14]{N1} that \(b\) is the unique block of \(G\) covering \(c\) and that \(c\) is \(G\)-invariant.
Hence, \(b=c\) and we have
\[
\dim_k(kGb) = |P:P \cap N| \cdot \dim_k(kNc).
\]
By \cite[Theorems 8.9 and 8.11]{N1}, we have
\[
\sum_{\chi \in \IBr(b)} \chi(1)^2 = \sum_{\varphi \in \IBr(c)} |G:G_{\varphi}| \cdot \varphi(1)^2 \geq \sum_{\varphi \in \IBr(c)} \varphi(1)^2.
\]
According to \cite[Theorem 9.26]{N1}, \(P \cap N\) is a defect group of \(c\), which implies \(s(b) \geq s(c)\). Therefore, we have
\[
\frac{\tau(b)}{p^{s(b)}|P|}
= \frac{\dim_k(kGb)}{p^{s(b)}|P| \sum_{\chi \in \IBr(b)} \chi(1)^2}
\leq \frac{|P:P \cap N| \cdot \dim_k(kNc)}{p^{s(b)}|P| \sum_{\varphi \in \IBr(c)} \varphi(1)^2}
\leq \frac{\tau(c)}{p^{s(c)}|P \cap N|}.
\]
This completes the proof.

\end{proof}

\begin{lem}\label{Nil-Ex}
Keep the notation above. Let \(D\) be a defect group of \(c\).
If \(c\) is \(G\)-invariant and nilpotent,
then there exists a finite group \(L\) containing \(D\) as a normal subgroup,
a central \(p'\)-subgroup \(Z\) of \(L\), and a block \(d\) of \(L\) such that:
\begin{enumerate}
\item \(L/DZ \cong G/N\),
\item \(b\) and \(d\) are basic Morita equivalent, and
\item \(\tau(b) = \tau(d)\).
\end{enumerate}
\end{lem}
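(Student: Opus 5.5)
This statement is essentially the Külshammer--Puig structure theorem for extensions of nilpotent blocks, and the plan is to quote that theorem and then show that basic Morita equivalences preserve \(\tau\).

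\textbf{Step 1 (construction of \(L\), \(Z\), \(d\)).} Since \(c\) is \(G\)-invariant and nilpotent, the Külshammer--Puig theorem (Puig, ``Nilpotent extensions of blocks''; Külshammer--Puig, Invent.\ Math.\ 1990) applies. It provides a finite group \(L\) with \(D \trianglelefteq L\), a central \(p'\)-subgroup \(Z \le L\) with \(L/DZ \cong G/N\), and a block \(d\) of \(L\). These come with an algebra isomorphism
\[
kGb \cong S \otimes_k kLd,
\]
where \(S = \operatorname{End}_k(V)\) is a full matrix algebra over \(k\). Here \(V\) is the endopermutation-type module coming from the source algebra of the nilpotent block \(c\), and \(V\) is extended to \(G\) by a twisted action that is absorbed into the central extension \(Z\). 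Puig's refined form of the theorem states that \(b\) and \(d\) are basic Morita equivalent, since the bimodule inducing the equivalence has endopermutation source. So items 1 and 2 are quoted from this theorem rather than reproved. I expect the main care needed to be in checking that the version cited gives the basic Morita equivalence and not merely a Morita equivalence. Puig's formulation gives it directly.

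\textbf{Step 2 (\(\tau\) is invariant under \(A \mapsto S \otimes_k A\)).} Let \(n = \dim_k V\). The simple modules of \(kGb\) are \(V \otimes_k T\), where \(T\) runs over the simple \(kLd\)-modules. Hence
\begin{itemize}
\item \(\IBr(b)\) is in bijection with \(\IBr(d)\), with degrees multiplied by \(n\);
\item \(\sum_{\chi\in\IBr(b)} \chi(1)^2 = n^2 \sum_{\psi \in \IBr(d)} \psi(1)^2\);
\item \(\dim_k(kGb) = n^2 \dim_k(kLd)\).
\end{itemize}
Taking the ratio gives \(\tau(b) = \tau(d)\), which is item 3. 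This is the same argument as in the first case of Lemma \ref{p-Reg}.

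\textbf{Step 3 (check against Morita equivalence).} It is worth noting why Step 2 is needed. A general Morita equivalence preserves the Cartan matrix but not character degrees, so item 3 does not follow from item 2 alone. It follows from the explicit tensor decomposition with a full matrix algebra. Given that decomposition, the computation is immediate because \(\dim_k\) and the degree sums both scale by \(n^2\).
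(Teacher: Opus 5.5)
Your proposal is correct in substance, but it gets item 3 by a different mechanism than the paper.

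The paper never decomposes \(kGb\) as a tensor product. It takes the \(E\)-graded basic Morita equivalence between \(kGc\) and a twisted group algebra \(k_\alpha L'\) from \cite[Theorem 1.1]{CMT}. It realizes \(k_\alpha L'\) as \(kLe\) for a central \(p'\)-extension \(L\) via \cite[Proposition 1.2.18]{L}. It then obtains \(\tau(b)=\tau(d)\) from two separate facts:
\begin{enumerate}
\item the equivalence preserves Cartan matrices;
\item \cite[Theorem 3.2(6)]{Bo} shows that the induced bijection \(\sigma\colon \IBr(d)\to\IBr(b)\) multiplies all degrees by the same factor \(\varphi(1)\).
\end{enumerate}

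You instead use the isomorphism form of the K\"ulshammer--Puig theorem, \(kGc\cong M_n(k)\otimes_k k_\alpha L'\), and hence \(kGb\cong M_n(k)\otimes_k kLd\). Since \(\tau(b)=\dim_k A/\dim_k(A/J(A))\) for \(A=kGb\), \(\tau\) is visibly invariant. The uniform degree scaling that the paper imports from \cite{Bo}, by \(n=\varphi(1)\), comes for free. So your route is more elementary for item 3, while the paper's route extracts items 2 and 3 from a single cited equivalence.

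The point to tighten is item 2. An abstract algebra isomorphism \(kGb\cong S\otimes_k kLd\) only yields a Morita equivalence. That the inducing bimodule has vertex \(\Delta D\) and endopermutation source, compatibly with the local structure, is not automatic. It is exactly the content of the refinement the paper cites as \cite[Theorem 1.1]{CMT}. You should cite such a result precisely rather than ``Puig's refined form''. You should also note that it concerns the same extension \(L'\) and cocycle \(\alpha\) as the K\"ulshammer--Puig isomorphism, so that items 2 and 3 are statements about the same block \(d\).
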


\begin{proof}
Let \(E = G/N\). By \cite[Theorem 1.1]{CMT}, there exists a group extension \(L'\) of \(E\) by \(D\) and a 2-cocycle \(\alpha \in H^2(E, k^{\times})\) such that there exists a \(E\)-graded basic Morita equivalence \(\Phi'\) between the block extension algebra \(kGc\) and the twisted group algebra \(k_{\alpha}L'\). By \cite[Proposition 1.2.18]{L}, there exists a central \(p'\)-extension \(L\) of \(L'\) by a group \(Z\), together with a central idempotent \(e\) of \(kL\), such that \(kLe \cong k_{\alpha}L'\) as \(k\)-algebras. Here, we regard \(D\) as a normal subgroup of \(L\) through the canonical map \(L \to L'\), thus \(e\) is a block of \(DZ\). Therefore, the \(E\)-graded basic Morita equivalence \(\Phi'\) induces a \(E\)-graded basic Morita equivalence \(\Phi\) between \(kGc\) and \(kLe\).

Let \(d\) be the block of \(L\) corresponding to \(b\) under \(\Phi\) and let \(\sigma\) be the corresponding bijection from \(\IBr(d)\) to \(\IBr(b)\). Thus, \(d\) and \(b\) have the same Cartan matrix, with Cartan elements \(c_{\alpha\beta} = c_{\sigma(\alpha)\sigma(\beta)}\) for \(\alpha,\beta \in \IBr(d)\). Let \(\varphi\) and \(\phi\) be the unique irreducible Brauer characters of the blocks \(c\) and \(e\), respectively. By \cite[Theorem 3.2(6)]{Bo}, we have \(\sigma(\alpha)(1) / \varphi(1) = \alpha(1) / \phi(1)=\alpha(1)\) for all \(\alpha \in \IBr(b)\). Therefore
\[
\tau(b) = \frac{\sum_{\alpha,\beta \in \IBr(d)} c_{\sigma(\alpha)\sigma(\beta)} \sigma(\alpha)(1) \sigma(\beta)(1)}{\sum_{ \xi \in \IBr(d)} \sigma(\xi)(1)^2}
=
\frac{\sum_{\alpha,\beta \in \IBr(d)} c_{\alpha\beta} \alpha(1) \beta(1)}{\sum_{\xi \in \IBr(d)} \xi(1)^2}
= \tau(d).
\]
This completes the proof.
\end{proof}

At the end of this section, we give a proof of Theorem \ref{Main}.
We use the standard notation: \(\mathbf{O}_p(G)\) for the maximal normal \(p\)-subgroup of \(G\),
\(\mathbf{O}_{p'}(G)\) for the maximal normal \(p'\)-subgroup of \(G\),
\(\mathbf{Z}(G)\) for the center of \(G\),
\(\mathbf{E}(G)\) for the layer of \(G\),
\(\mathbf{F}(G)\) for the Fitting subgroup of \(G\),
and
\(\mathbf{F}^*(G)\) for the generalized Fitting subgroup of \(G\).

\begin{proof}[Proof of Theorem \ref{Main}]
Let \(G\) be a finite group and \(b\) a block of \(G\) with non-trivial abelian defect group \(P\).
Suppose that inequality (\ref{equation}) holds for every block \(f\) of every quasi-simple group with non-trivial abelian defect group \(Q\), that is,
\[
\tau(f) < p^{s(f)} |Q|.
\]
We aim to show that \(\tau(b) < p^{s(b)} |P|\).

Note that basic Morita equivalence between blocks preserves defect groups.
Proceeding as in the proof of \cite[Proposition 6.1]{A} and repeatedly applying Lemma \ref{Fong} and Lemma \ref{Nil-Ex} to all blocks of all normal subgroups, we may finally assume the following for any \(N \unlhd G\):
the block of \(N\) covered by \(b\) is \(G\)-invariant;
moreover, if \(b\) covers a nilpotent block of \(N\), then
\(N \leq \mathbf{Z}(G)\mathbf{O}_p(G)\).
In this case, let \(b_N\) denote the unique block of \(N\) covered by \(b\).

Now, suppose that \(\mathbf{E}(G)\) is trivial. Then the generalized Fitting subgroup \(\mathbf{F}^*(G)\) coincides with \(\mathbf{F}(G)\), which implies that \(\mathbf{F}^*(G) = \mathbf{O}_p(G)\mathbf{Z}(G)\).
Given that \(\mathbf{O}_p(G) \leq P\), \(P\) is abelian, and \(C_G(\mathbf{F}^*(G)) \leq \mathbf{F}^*(G)\)(see \cite[Theorem 9.8]{I}), we deduce \(P \leq \mathbf{F}^*(G)\), and hence \(P = \mathbf{O}_p(G)\) and \(C_{G}(P)=P\mathbf{Z}(G)\).
In this case, \(G/P\) is a \(p'\)-group by \cite[Theorem 6.7.6(v)]{L}. Applying Lemma \ref{p-Reg}, we conclude \(\tau(b) = \tau(b_P) = |P| < p^{s(b)} |P|\).

Next, suppose that \(\mathbf{E}(G)\) is non-trivial.
By \cite[Theorem 9.26]{N1}, \(D = P \cap \mathbf{F}^*(G)\) is a defect group of \(b_{\mathbf{F}^*(G)}\).
Set \(K = \mathbf{F}^*(G)C_G(D)\). The Frattini
argument implies that \(K\) is normal in \(G\).
Since \(D \leq P\) and \(P\) is abelian, we have \(C_{G}(P) \leq K\), so \(P\) is a defect group of \(b_K\).
We may take \((P, e_P)\) to be a common maximal Brauer pair associated with the blocks \(b\) and \(b_K\).
By the Frattini argument, \(G = K \cdot N_G(P, e_P)\).
Since \(G/K \cong N_G(P, e_P)/K \cap N_G(P, e_P)\), it follows that \(G/K\) is a \(p'\)-group.
%\(C_G(P) \leq K \cap N_G(P, e_P)\)

Let \(X_1, \dots, X_r\) be all distinct components of \(G\). Notice that \([\mathbf{E}(G),\mathbf{F}(G)]=1\)(see \cite[Theorem 9.7]{I}). For each \(i\), \(X_i\) is normal in \(\mathbf{F}^*(G)\) and let \(d_i\) be the block of \(X_i\) covered by \(b_{\mathbf{F}^*(G)}\).
Suppose, for contradiction, that there exists some \(t\) \((1 \leq t \leq r)\) such that the block \(d_t\) is nilpotent.
Take a maximal subset \(I \subseteq \{1, 2, \ldots, r\}\) with the property that \(d_i\) is nilpotent for all \(i \in I\).
Denote by \(L\) the product of all \(X_i\) with \(i \in I\).
Since \(b_{\mathbf{F}^*(G)}\) is \(G\)-invariant,
then \(L\) is a normal subgroup in \(G\).
By \cite[Lemma 2.7]{Ar}, the block of \(L\) covered by \(b\) is nilpotent, so \(L \subseteq \mathbf{Z}(G)\mathbf{O}_p(G)\). However, this contradicts \(X_t \leq L\), since the component \(X_t\) is not contained in \(\mathbf{F}(G)\).
Therefore, each \(d_i\) is non-nilpotent.

Note that \(D \cap X_i\) is a defect group of \(d_i\), and hence \(D \cap \mathbf{Z}(X_i)\) is properly contained in \(D \cap X_i\).
For any \(x \in C_G(D)\) and any \(i\), we have \(D \cap X_i = (D \cap X_i)^x = D \cap X_i^x\).
This implies \(X_i^x = X_i\) for all \(i\); otherwise, \(D \cap X_i = D \cap \mathbf{Z}(X_i)\), a contradiction.
Thus, every \(X_i\) is normal in \(K\). Since \(K\) is normal in \(G\), by \cite[Problem 9A.1]{I}, we have
\[
\mathbf{F}^*(K) = \mathbf{F}^*(G) = \mathbf{Z}(G)\mathbf{O}_p(G)\mathbf{E}(G) = \mathbf{Z}(K)\mathbf{E}(K).
\]
Moreover, since \(K/\mathbf{F}^*(G)\) is solvable by \cite[Lemma 2.4]{ZZ}, it follows that \(G/\mathbf{E}(G)\) is \(p\)-solvable.
Consider the upper \(p\)-series of \(G/\mathbf{E}(G)\) (see \cite[\S 6.3]{Go})
\[
1 \leq \mathbf{O}_p(G/\mathbf{E}(G)) \leq \mathbf{O}_{p,p'}(G/\mathbf{E}(G)) \leq \mathbf{O}_{p,p',p}(G/\mathbf{E}(G)) \leq \cdots \leq G/\mathbf{E}(G).
\]
Every group in this chain is characteristic in \(G/\mathbf{E}(G)\), and each successive index is either a power of \(p\) or prime to \(p\).
Take a preimage of this series under \(\pi : G \to G/\mathbf{E}(G)\) to obtain
\[
\mathbf{E}(G) = N_0 \leq N_1 \leq N_2 \leq N_3 \leq \cdots \leq N_t = G,
\]
where each subgroup \(N_i\) is characteristic in \(G\). Let \(b_{N_i}\) be the unique blocks of \(N_i\) covered by \(b\), and let \(P_i = P \cap N_i\) be the defect group of \(b_{N_i}\).
By alternating applications of Lemma \ref{p-Reg} and Lemma \ref{p-Ex}, we obtain
\[
\frac{\tau(b)}{p^{s(b)}|P|} = \frac{\tau(b_{N_t})}{p^{s(b_{N_t})}|P_t|} \leq \frac{\tau(b_{N_{t-1}})}{p^{s(b_{N_{t-1}})}|P_{t-1}|} \leq \cdots \leq \frac{\tau(b_{N_0})}{p^{s(b_{N_0})}|P_0|} = \frac{\tau(b_{\mathbf{E}(G)})}{p^{s(b_{\mathbf{E}(G)})}|P \cap \mathbf{E}(G)|}.
\]

Recall that \(\mathbf{E}(G) = X_1 X_2 \cdots X_r\) is a central product of \(X_i\).
Let \(\bar{\mathbf{E}}(G) = \mathbf{E}(G)/\mathbf{O}_p(\mathbf{Z}(\mathbf{E}(G)))\) and let \(\bar{X}_i\) be the image of \(X_i\) in \(\bar{\mathbf{E}}(G)\).
Let \(\bar{b}_{\mathbf{E}(G)}\) and \(\bar{d}_i\) be the blocks of \(\bar{\mathbf{E}}(G)\) and \(\bar{X}_i\) dominated by \(b_{\mathbf{E}(G)}\) and \(d_i\), respectively.
Let \(\bar{D}\) and \(\bar{D}_i\) be the images of \(P \cap \mathbf{E}(G)\) and \(P \cap X_i\) in \(\bar{\mathbf{E}}(G)\), respectively.
Then \(\bar{D}\) and \(\bar{D}_i\) are defect groups of \(\bar{b}_{\mathbf{E}(G)}\) and \(\bar{d}_i\), respectively.
Applying  \cite[Lemma 7.5(i,ii)]{S}, we have
 \[\bar{b}_{\mathbf{E}(G)}=\bar{d}_1\otimes \cdots\otimes \bar{d}_r~{\rm and}~\bar{D}=\bar{D}_1\times\cdots\times \bar{D}_r.\]
A direct computation shows that \[\frac{\tau(\bar{b}_{\mathbf{E}(G)})}{p^{s(\bar{b}_{\mathbf{E}(G)})}|\bar{D}|}
        =\prod_{i=1}^r\frac{\tau(\bar{d}_i)}{p^{s(\bar{d}_i)}|\bar{D}_i|}.\]
Since each \(d_i\) is non-nilpotent, so is \(\bar{d}_i\) by \cite[Lemma 2]{W}.
As \(\bar{X}_i\) is quasi-simple, our assumption implies
\[
\frac{\tau(\bar{d}_i)}{p^{s(\bar{d}_i)}|\bar{D}_i|} < 1.
\]
Summarizing the above, by Lemma \ref{Central}, we have
\[\frac{\tau(b)}{p^{s(b)}|P|}\leq\frac{\tau(b_{\mathbf{E}(G)})}{p^{s(\mathbf{E}(G))}|P\cap \mathbf{E}(G)|}\leq\frac{\tau(\bar{b}_{\mathbf{E}(G)})}{p^{s(\bar{b}_{\mathbf{E}(G)})}|\bar{D}|}
        =\prod_{i=1}^r\frac{\tau(\bar{d}_i)}{p^{s(\bar{d}_i)}|\bar{D}_i|}<1.\]
This completes the proof.
\end{proof}

\section{Proof of  Corollary \ref{Main1}}

In this section, we give a proof of Corollary \ref{Main1}. Throughout, we fix the prime \(p = 2\). First, we recall the classification of 2-blocks of quasi-simple groups with abelian defect groups.

\begin{thm}{{\rm (\cite[Theorem 6.1]{EKK})}}\label{Classification}
Let \(G\) be a quasi-simple group and \(b\) a block of \(G\) with abelian defect group \(P\). Then one of the following holds
\begin{enumerate}[itemsep=2pt,topsep=5pt,label=\emph{(\roman*)}]
\item \(G/Z(G)\) is one of \(A_1(2^n)\), \(^2G_2(q)\) (where \(q \geq 27\) is a power of \(3\) with odd exponent), or \(J_1\), \(b\) is the principal block and \(P\) is elementary abelian.

\item \(G\) is \(Co_3\), \(b\) is a non-principal block and \(P\) is an elementary abelian group of rank \(3\) (there is one such block).

\item There exists a finite group \(\widetilde{G}\) such that \(G \unlhd \widetilde{G}\), \(Z(G) \leq Z(\widetilde{G})\) and such that \(b\) is covered by a nilpotent block of \(\widetilde{G}\).

\item The block \(b\) is Morita equivalent to a block \(b_L\) of \(L\) where \(L = L_0 \times L_1\) is a subgroup of \(G\) such that the following holds: The defect groups of \(b_L\) are isomorphic to \(P\), \(L_0\) is abelian, and the block of \(L_1\) covered by \(b_L\) has Klein four defect groups.
\end{enumerate}
\end{thm}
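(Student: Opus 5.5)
Theorem \ref{Classification} is quoted from \cite[Theorem 6.1]{EKK}, and we use it as a black box. Still, this is the route we would take to prove it. The proof rests on the classification of finite simple groups, and we organise it by the isomorphism type of \(G/Z(G)\). We may assume \(b\) has positive defect. Because \(P\) is an abelian \(2\)-group, strong restrictions follow immediately, for instance on the inertial quotient and on the possible Sylow structure.

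The first two families are the quick ones.
\begin{enumerate}
\item Alternating groups and their double covers. For \(A_n\), blocks come from \(2\)-cores and \(2\)-weights, and the defect groups are Sylow \(2\)-subgroups of smaller symmetric groups. Such a group is abelian only for weight at most \(1\), which should give Klein four defect groups, up to a direct factor. For \(2.A_n\), the spin blocks are handled by the analogous bar-core theory. We expect these cases to land in (iv), or in (iii) where the block is nilpotent. Morita equivalences to wreath or product situations (Scopes, Marcus) supply the required \(L=L_0\times L_1\).
\item Groups of Lie type in characteristic \(2\). Every block has either full defect or defect zero (Humphreys). So we only need to know when a Sylow \(2\)-subgroup is abelian, and this happens only for \(A_1(2^n)\). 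This gives (i).
\end{enumerate}

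The third family is Lie type in odd characteristic, and it carries most of the work. Let \(b\) lie in the Lusztig series labelled by a semisimple \(2'\)-element \(s\). The Bonnafé--Rouquier / Bonnafé--Dat--Rouquier Morita equivalences move \(b\) to a block of \(N_{\widetilde G}(L)\), with \(L\) the Levi subgroup dual to \(C^\circ(s)\). This preserves defect groups, so we reduce to \(s\) quasi-isolated. Two outcomes then arise.
\begin{itemize}
\item If the reduction ends at a torus, or more generally at a situation with trivial inertial quotient, the block is nilpotent after passing to an overgroup \(\widetilde G\) with connected centre. This yields (iii).
\item Otherwise, with \(P\) abelian, the unipotent \(2\)-blocks with abelian defect must come from \(SL_2(q)\)-type factors, which have Klein four defect groups, together with abelian toral parts. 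This yields (iv).
\end{itemize}
The Ree groups \(^2G_2(q)\) stand apart: their Sylow \(2\)-subgroups are elementary abelian of order \(8\), and the principal block falls under (i).

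Finally, the sporadic groups and the exceptional covers are settled by consulting decomposition data and defect group tables (An--Eaton), with machine computation where needed. This should leave only the principal block of \(J_1\) and one non-principal block of \(Co_3\), of defect \(2^3\), as exceptions. We expect the main obstacle to be the odd-characteristic Lie case, and specifically the quasi-isolated elements in exceptional groups of types \(E_6,E_7,E_8\). There the Jordan-decomposition reduction gives no Morita equivalence, so one would have to invoke the Enguehard-type descriptions of unipotent and quasi-isolated \(2\)-blocks directly to show that abelian defect forces the form in (iv).
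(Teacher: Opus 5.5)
Like the paper, which gives no proof of Theorem~\ref{Classification} and simply quotes \cite[Theorem 6.1]{EKK}, you invoke the result as a black box, and that is all that is needed here. Your illustrative outline should not be offered as a sketch of that proof, however, because several of its specifics are wrong at \(p=2\): \(Z(2.A_n)\) is a \(2\)-group, so the blocks of \(2.A_n\) are just those of \(A_n\) with defect groups pulled back (no bar-core theory is involved, and the Klein four defect groups of the weight-\(2\) blocks of \(A_n\) lift to \(Q_8\)); the blocks of \(A_n\) with non-trivial abelian defect groups are those of weight \(2\), not weight at most \(1\); and \(SL_2(q)\) for odd \(q\) has a unique involution, so none of its blocks has Klein four defect groups.
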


\begin{prop}\label{Car}
Let \(G\) be a quasi-simple group and \(b\) a block of \(G\) with abelian defect group \(P\).
Then for every \(\varphi \in \IBr(b)\), we have
\(
c_{\varphi\varphi} \leq |P|,
\)
where \(c_{\varphi\varphi}\) is the corresponding diagonal entry of the Cartan matrix of \(b\).
\end{prop}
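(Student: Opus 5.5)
We will prove Proposition \ref{Car} by going through the four cases of Theorem \ref{Classification}; note that $p=2$ throughout this section. Two general facts will be used repeatedly. First, a Morita equivalence between blocks induces a bijection of irreducible Brauer characters that preserves the Cartan matrix. Second, for blocks $b_1,b_2$ of $H_1,H_2$, the Cartan matrix of $b_1\otimes b_2$ is the Kronecker product $C_{b_1}\otimes C_{b_2}$, so the diagonal entries multiply.

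\emph{Case (iii).} Let $\tilde b$ be the nilpotent block of $\widetilde{G}$ covering $b$, with defect group $\widetilde{P}$. By Puig's theorem, $k\widetilde{G}\tilde b\cong \mathrm{Mat}_m(k\widetilde{P})$, so $l(\tilde b)=1$ and $C_{\tilde b}=(|\widetilde{P}|)$. Puig's results on blocks covered by nilpotent blocks (as used in \cite{EKK}) should then give that $b$ is itself basic Morita equivalent to a twisted group algebra / crossed product of $P$ by a $p'$-group. Alternatively, one can argue directly: every $\varphi\in\IBr(b)$ lies below the unique $\tilde\varphi\in\IBr(\tilde b)$. Since $|\widetilde{G}:G\mathbf{C}|$ considerations are delicate, the cleanest route is to quote that $b$ is Morita equivalent to $k_\alpha(P\rtimes E)$ with $E$ a $p'$-group acting on $P$. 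For such an algebra the projective indecomposable modules are induced from projective simples of $k_\alpha E$, and the Cartan entry is $c_{\varphi\varphi}=\dim\mathrm{Hom}(P_\varphi,P_\varphi)\le \dim_k kP=|P|$. More precisely, $c_{\varphi\psi}$ equals the multiplicity of $\psi$ in $\varphi\otimes kP$ viewed as an $E$-module, and this is at most $|P|$ when $\psi=\varphi$.

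\emph{Cases (i) and (ii)} are checked from known data. For $A_1(2^n)$ (that is, $\mathrm{SL}_2(2^n)$, where the centre is trivial at $p=2$ except for small coincidences), the Cartan matrices of the principal block are known; each diagonal entry has the form $2^{\,\cdot}$ times small factors, and is bounded by $2^n=|P|$ (for example $c_{11}=2^n$ in the cases recorded in the literature). For ${}^2G_2(q)$ and $J_1$, the principal blocks have elementary abelian defect group of order $8$. By Landrock--Michler / Fong's results, these are Morita (even Puig) equivalent to the principal block of $\mathrm{SL}_2(8)$ or of $2^3\rtimes 7\rtimes 3$. In either model the decomposition matrices are explicit, and all diagonal Cartan entries are at most $8$. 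For $Co_3$ with its rank-$3$ non-principal block, we use the known decomposition matrix, where the block is Morita equivalent to the principal block of $\mathrm{Aut}(\mathrm{SL}_2(8))$, and read off $c_{\varphi\varphi}\le 8$.

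\emph{Case (iv).} Here $b$ is Morita equivalent to $b_L=b_0\otimes b_1$, where $b_0$ is a block of the abelian group $L_0$ with defect group $P_0$, and $b_1$ has Klein four defect group $V$, with $P\cong P_0\times V$. Then $C_{b_0}=(|P_0|)$. By Erdmann's classification (now confirmed up to Morita equivalence by Craven--Eaton--Kessar--Linckelmann), $b_1$ is Morita equivalent to $kV$, $kA_4$ or the principal block of $kA_5$. Their Cartan matrices are
\[
(4),\qquad \begin{pmatrix}2&1&1\\1&2&1\\1&1&2\end{pmatrix},\qquad \begin{pmatrix}4&2&2\\2&2&1\\2&1&2\end{pmatrix},
\]
so all diagonal entries are at most $4=|V|$. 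Taking the Kronecker product gives $c_{\varphi\varphi}\le |P_0|\cdot 4=|P|$.

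The main obstacle is case (iii). The issue is extracting a usable structural description of $b$ from the fact that it is covered by a nilpotent block of an overgroup with $Z(G)\le Z(\widetilde G)$. I would first try to show directly that $c_{\varphi\varphi}\le c_{\tilde\varphi\tilde\varphi}\cdot |\widetilde P\cap G|/|\widetilde P|$ by restricting the projective cover of $\tilde\varphi$ to $G$. Failing that, I would invoke Külshammer--Puig's theorem on extensions of nilpotent blocks to get the crossed product form used above.
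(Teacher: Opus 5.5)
There is a genuine gap in your case (i), in the treatment of $J_1$ and ${}^2G_2(q)$. You say the principal $2$-blocks of $J_1$ and ${}^2G_2(q)$ are Morita equivalent to $B_0(k\,\mathrm{SL}_2(8))$ or to $k(2^3\rtimes 7\rtimes 3)$. For $J_1$ this is false on both counts. $B_0(k\,\mathrm{SL}_2(8))$ has seven simple modules, while $B_0(kJ_1)$ has five. In $k(2^3\rtimes F_{21})$, each of the three one-dimensional simples $\lambda$ has $c_{\lambda\lambda}$ equal to the multiplicity of the trivial module in the permutation module $kP$. That is the number of $F_{21}$-orbits on $2^3$, namely $2$. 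By contrast, the diagonal of the Cartan matrix of $B_0(kJ_1)$ is $8,4,4,4,4$. A Morita equivalence preserves the multiset of diagonal Cartan entries, so neither model applies. The bound for these blocks therefore has to come from their actual Cartan matrices, which is what the paper does: Landrock's computation for ${}^2G_2(q)$ and the GAP character table library for $J_1$.

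A few smaller points in cases (i) and (ii):
\begin{itemize}
\item For $Co_3$, your identification with $B_0(k\,\mathrm{Aut}(\mathrm{SL}_2(8)))$ is consistent with the paper's matrix. You still have to read off the diagonal, which is $4,4,8,4,2$.
\item For $A_1(2^n)$, saying the entries are ``$2^{\cdot}$ times small factors'' with $c_{11}=2^n$ does not bound every diagonal entry. The needed input is Alperin's determination of the Cartan invariants of $\mathrm{SL}_2(2^n)$, which the paper cites.
\end{itemize}

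Case (iv) is the paper's argument: $C_{b_L}=|P_0|\,C_{b_1}$, with the Klein four Cartan matrices. In case (iii), the result you need is Puig's theorem that a block covered by a nilpotent block is inertial (the paper's reference to Puig). Together with the normal-defect-group description, this gives $kGb$ Morita equivalent to $k_\alpha(P\rtimes E)$. Your fallback via the K\"ulshammer--Puig extension theorem points the wrong way. That theorem concerns a block covering a nilpotent block of a normal subgroup, whereas here the nilpotent block lives in the overgroup $\widetilde G$. Your tentative inequality $c_{\varphi\varphi}\le c_{\tilde\varphi\tilde\varphi}|\widetilde P\cap G|/|\widetilde P|$ is also unproved. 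Once the twisted group algebra model is available, your direct count is correct: the projective cover of a simple $k_\alpha E$-module $S$ restricts to $kP\otimes S$, so $c_{\varphi\varphi}\dim S\le |P|\dim S$. This is a slightly more economical substitute for the paper's route, which passes to a central $2'$-extension $L$ with $kLd\cong k_\alpha(P\rtimes E)$ and uses $\Phi_\varphi(1)=|L|_2\varphi(1)$ for $p$-solvable groups.
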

\begin{proof}
The proof proceeds by examining the classification of such blocks given in Theorem~\ref{Classification}.

Suppose that \( G/Z(G) \) is isomorphic to \( A_1(2^n) \) or \( ^2G_2(q) \), where \( q \geq 27 \) is a power of \( 3 \) with odd exponent, and that \( b \) is the principal block of \( G \). Then, by \cite[Theorem 2]{Al} and \cite[Theorem 3.9]{La}, we have \( c_{\varphi\varphi} \leq |P| \) for every \( \varphi \in \IBr(b) \).

 Suppose that either \( G \) is isomorphic to \( J_1 \) and \( b \) is its principal block, or \( G \) is isomorphic to \( Co_3 \) and \( b \) is the block stated in Theorem~\ref{Classification}(ii). In both cases, the defect group \( P \) of \( b \) is elementary abelian of order \( 8 \). Using the GAP Character Table Library \cite{GAP2020}, we compute the Cartan matrices for \( b \):
\begin{enumerate}[itemsep=2pt, topsep=5pt, label=(\roman*)]
    \item For \( G \cong J_1 \):
    \[
    C = \begin{pmatrix}
        8 & 4 & 4 & 4 & 4 \\
        4 & 4 & 3 & 3 & 1 \\
        4 & 3 & 4 & 2 & 2 \\
        4 & 3 & 2 & 4 & 2 \\
        4 & 1 & 2 & 2 & 4
    \end{pmatrix}.
    \]
    \item For \( G \cong Co_3 \):
    \[
    C = \begin{pmatrix}
        4 & 2 & 4 & 2 & 2 \\
        2 & 4 & 4 & 2 & 2 \\
        4 & 4 & 8 & 4 & 3 \\
        2 & 2 & 4 & 4 & 2 \\
        2 & 2 & 3 & 2 & 2
    \end{pmatrix}.
    \]
\end{enumerate}
A direct inspection of the diagonal entries shows that \( c_{\varphi\varphi} \leq 8 = |P| \) for all \( \varphi \in \IBr(b) \) in both cases.

Suppose that \(b\) is as stated in Theorem~\ref{Classification}(iii). By \cite[Theorem 3.13]{P2}, the block \(b\) is inertial, and its block algebra \(kGb\) is Morita equivalent to a twisted group algebra \(k_\alpha (P \rtimes E)\), where \(E\) is a \(2'\)-group acting on \(P\) and \(\alpha \in H^2(E, k^\times)\). Furthermore, by \cite[Proposition 1.2.18]{L}, there exists a central \(2'\)-extension \(L\) of \(P \rtimes E\) by a group \(Z\) together with a central idempotent \(d\) of \(kL\) such that \(k_\alpha (P \rtimes E) \cong kL d\) as \(k\)-algebras. Since \(Z\) is a \(2'\)-group, we have \(|L|_2 = |P|\). By \cite[Theorem 2.22 and Corollary 10.14]{N1}, it follows that \(c_{\phi\phi} \leq |P|\) for all \(\phi \in \IBr(d)\). The Morita equivalence between \(kGb\) and \(kLd\) implies that the same inequality holds for all \( \varphi \in \IBr(b)\).

Suppose that \(b\) is a non-nilpotent block satisfying the conditions in Theorem~\ref{Classification}(iv), and keep the notation therein. Since \(b\) is non-nilpotent, the block \(b_{L_1}\) of \(L_1\) covered by \(b_L\) is also non-nilpotent.

As \(b_{L_1}\) has Klein four defect groups, counting the number of composition factors of the projective indecomposable modules (see \cite[Theorem 4]{E}) shows the Cartan matrix \(C_{b_{L_1}}\) of \(b_{L_1}\) must be one of the following:
\[
\begin{pmatrix}
2 & 1 & 1 \\
1 & 2 & 1 \\
1 & 1 & 2
\end{pmatrix}
\quad \text{or} \quad
\begin{pmatrix}
4 & 2 & 2 \\
2 & 2 & 1 \\
2 & 1 & 2
\end{pmatrix}.
\]
By \cite[Theorem 9.10]{N1}, the Cartan matrix of \(b_L\) is equal to \(|L_0|_2 \cdot C_{b_{L_1}}\). Since \(|P| = 4 \cdot |L_0|_2\), we have \(c_{\phi\phi} \leq |P|\) for every \(\phi \in \IBr(b_L)\).
The Morita equivalence between \(kGb\) and \(kLb_L\) from Theorem~\ref{Classification}(iv) implies the same inequality holds for all \(\varphi \in \IBr(b)\).

Having covered all cases in Theorem~\ref{Classification}, the proposition is proved.
\end{proof}

\begin{proof}[Proof of Corollary \ref{Main1}]
By Theorem \ref{Main}, to prove Corollary \ref{Main1}, it suffices to verify that inequality (\ref{equation}) holds for all blocks of quasi-simple groups with non-trivial abelian defect groups.
Let \(G\) be a quasi-simple group and \(b\) a block of \(G\) with non-trivial abelian defect group \(P\).
We aim to show that \(\tau(b) < p^{s(b)}|P|\).

Let \(C\) be the Cartan matrix of \(b\) with trace \(\mathrm{Tr}(C)\). By \cite[Proposition 2]{HW}, we have \(\tau(b) \leq \mathrm{Tr}(C)\). Since \(P\) is non-trivial, by \cite[Theorem G]{R} and \cite[Corollary 3]{MR}, we obtain \(l(b) < p^{s(b)}\). Furthermore, Proposition~\ref{Car} establishes that \(c_{\varphi\varphi} \leq |P|\) for every \(\varphi \in \IBr(b)\). Consequently, \[\mathrm{Tr}(C) = \sum_{\varphi \in \IBr(b)} c_{\varphi\varphi} \leq l(b) \cdot |P| < p^{s(b)} |P|.\] Therefore, \(\tau(b) \leq \mathrm{Tr}(C) < p^{s(b)} |P|\), as required.
\end{proof}

%%%%%%%%%%%%%%%%%%%%%%%%%%%%%%%%%%%%%%%%%%%%%%%%%%%%%%%
%%%%%%%%%%%%%%%%%%%%%%%%%%%%%%%%%%%%%%%%%%%%%%%%%%%%%%%

\end{document}